\documentclass{amsart}

\usepackage{amssymb}
\usepackage{mathtools}

\newtheorem{theorem}{Theorem}[section]
\newtheorem{lemma}[theorem]{Lemma}

\theoremstyle{definition}
\newtheorem{definition}[theorem]{Definition}

\newtheorem{que}{Question}

\theoremstyle{remark}
\newtheorem{remark}[theorem]{Remark}

\numberwithin{equation}{section}

\newcommand{\IN}{\mathbb{N}}
\DeclareMathOperator{\rng}{rng}
\newcommand{\place}{{{}\cdot{}}}

\begin{document}

\title{%
  How to get the Random Graph\\
  with non-uniform probabilities?%
}

\author[L.~Coregliano]{Leonardo N.~Coregliano}
\address[L.~Coregliano]{Institute for Advanced Study, Princeton, NJ, USA}
\email{lenacore@ias.edu}
\thanks{The first-named author's work was supported by a grant from the Institute for Advanced Study School of Mathematics.}

\author{Jaros{\l}aw Swaczyna}
\address[J.~Swaczyna]{Institute of Mathematics, {\L}\'od\'z University of Technology, Aleje Politechniki 8, 93-590 {\L}\'od\'z, Poland}
\email{jaroslaw.swaczyna@p.lodz.pl}

\author{Agnieszka Widz}
\address[A.~Widz]{Institute of Mathematics, {\L}\'od\'z University of Technology, Aleje Politechniki 8, 93-590 {\L}\'od\'z, Poland}
\email{AgnieszkaWidzENFP@gmail.com}
\thanks{The last-named author was supported by the NCN (National Science Centre, Poland), under the Weave-UNISONO call in the Weave programme 2021/03/Y/ST1/00124. The last-named author wants to also express her gratitude towards the Fields Institute in Toronto for organizing semester on Set Theoretic Methods in Algebra, Dynamics and Geometry and providing excellent conditions for research and meeting other mathematicians, in particular first-named author.}

\subjclass[2010]{Primary 05C80, 60C05}

\begin{abstract}
  The Rado Graph, sometimes also known as the (countable) Random Graph, can be generated almost surely by putting an edge between any pair of vertices with some fixed probability $p\in(0,1)$, independently of other pairs.

  In this article, we study the influence of allowing different probabilities for each pair of vertices. More specifically, we characterize for which sequences $(p_n)_{n\in\IN}$ of values in $[0,1]$ there exists a bijection $f$ from pairs of vertices in $\IN$ to $\IN$ such that if we put an edge between $v$ and $w$ with probability $p_{f(\{v,w\})}$, independently of other pairs, then the Random Graph arises almost surely.
\end{abstract}

\maketitle

\section{Introduction}

The Rado Graph is a fascinating object that appears unexpectedly in various areas of Mathematics. First constructed by Ackermann in~\cite{Ack}, it was a matter of interest for Erd\H{o}s and R\'{e}nyi in~\cite{ER}, Rado in~\cite{Rad}, and still attracts many mathematicians, see e.g.~\cite{CDM, DEKK, DM, GT}. The crucial property needed to define the Random Graph is the following.

\begin{definition}
  We say that a graph $(V,E)$ satisfies the property~\ref{star} if
  \begin{equation}\label{star}
    \begin{aligned}
      & \mbox{For all finite disjoint $A, B \subseteq V$ there is a vertex $v\in V$ such that}\\
      & \mbox{$v$ is connected to all elements of $A$ and to no element of $B$.}         
    \end{aligned}
    \tag{$\star$}
  \end{equation}
\end{definition}

This definition has three immediate consequences: a simple induction shows that~\ref{star} in fact implies that there are infinitely many $v$ connected to $A$ and to no element of $B$, any graph satisfying~\ref{star} must be infinite and have infinitely many edges and non-edges, and any two countable graphs satisfying property~\ref{star} are isomorphic (this follows by a standard back-and-forth argument). The latter observation allows us to call any countable graph with~\ref{star} the \emph{Random Graph}.

Cameron, is his paper~\cite{Ca}, presented a nice introduction to the topic, providing a number of instances where the Random Graph appears and explaining some of its basic properties. Therefore, we refer the reader to this paper for more detailed introduction. In the presented note we want to discuss some issues related to one of the most standard constructions leading to the Random Graph. Therefore, we will now sketch this construction and discuss some of its aspects. 

The simplest, although not exactly explicit, way of generating the Random Graph is by fixing a countably infinite set $V$ and declaring that any pair of vertices $\{v,w\}$ to be an edge with probability exactly $1/2$, independently of other pairs. It is straightforward to verify that with probability $1$, the resulting graph will satisfy property~\ref{star}, making it the Random Graph (in fact this is a consequence of the fact that in infinite coin flip any finite sequence of tails and heads appears infinitely many times). Putting it more simply, if $G_{\IN,1/2}$ is the countable Erd\H{o}s--R\'{e}nyi random graph model, then $G_{\IN,1/2}$ is isomorphic to the Random Graph with probability $1$. Now one may wonder, if there is something special in the probability $1/2$ used in this construction. In other words, we ask the following question about a property of a sequence of probabilities.

\begin{que}\label{Q0}
  For which assignments of probabilities to the edges do we obtain the Random Graph with probability $1$?\footnote{This question arose during the second-named author's collaborative work on the Random Graph with his bachelor's student, Aleksandra Czerczak.}
\end{que}

Even though the above question looks very natural, and the Random Graph was introduced in the first half of the 20th century, we were unable to find direct answer in the existing literature. Therefore, the aim of the presented note is to give an answer and also to popularize the fascinating object, that the Random Graph is, among wider audience. Another remark is that the question above it is not very precise, but now we will discuss it in order to formulate the right one. It is easy to see that if we replace $1/2$ by any other probability $p\in(0,1)$ we still get the property~\ref{star} (i.e., $G_{\IN,p}$ is also almost surely isomorphic to the Random Graph). But what happens if we allow different probabilities for various edges? An initial observation here is that if these probabilities are separated from $0$ and $1$, then we still get the Random Graph. 

It is natural to consider the case of probabilities tending to $0$ (or $1$) now, but we have to clarify some subtleties before this. Namely, note that we assign a probability to each pair of vertices; thus, formally, we do not have the sequence of probabilities. Of course we may rearrange them to get a sequence, but this idea requires some extra caution. Note that the property of generating the Random Graph is not invariant with respect to permutations! Indeed, suppose that we have some fixed arrangement of probabilities that generates the Random Graph, but the probabilities are not separated from $0$ (the second case is completely analogous). Then for every $\varepsilon>0$ we may split the probabilities into two infinite sets, say $C$ and $D$, such that the sum of elements of $C$ is smaller than $\varepsilon$, and $D$ contains the rest of them. Now fix arbitrary vertex $v\in V$ and assign the probabilities in such a way that elements of $D$ are probabilities of those edges, for which $v$ is one of the ends, and probabilities of all other edges are elements of $C$. Note that in such a case, probability of the existence of any edge for which $v$ is not an endpoint is less than $\varepsilon$; hence, with probability $1$ property~\ref{star} will not be satisfied. Therefore, right thing to do is considering properties of the sequence of probabilities, rather than some particular assignment, and the precise way to formulate the Question~\ref{Q0} is the following one.

\begin{que} \label{QM}
  Let $V$ be a countably infinite set. For which sequences $(p_n)_{n\in \IN}$ of elements of the interval $[0,1]$ there exists a bijection $f\colon [V]^2 \to \IN$ such that if we set probability of existence of edge $\{v,w\}$ as $p_{f(\{v,w\})}$ (to be picked independently from other pairs), with probability $1$ the resulting graph will be the Random Graph? 
\end{que} 

Let us conclude the introduction with another easy observation. Namely, suppose that the sequence of probabilities $(p_n)_{n\in\IN}$ has a finite series $\sum_{n=0}^\infty p_n < \infty$. Then, for any assignment of $p_n$'s values to edges between vertices from $V$, one won't get the Random Graph with probability $1$. Indeed, the (first) Borel--Cantelli Lemma, implies that with probability $1$, the graph will have only finitely many edges, hence~\ref{star} does not hold. As we will see, a similar almost sure finiteness argument is the only obstacle for producing the Random Graph.

\section{Preliminaries}

We will denote by $\IN$ the set of non-negative integers and for a set $V$ and $k\in\IN$, we denote by $[V]^k$ the set of subsets of $V$ of cardinality exactly $k$.

We start with a few lemmas that will be needed to prove our main theorem. 
\begin{lemma}\label{LemMult}
  Let $(a_n)_{n\in\IN}$ be a non-increasing sequence of elements of interval $[0,1]$ and let $k \in \IN$. Suppose that $\lim_{n\to\infty} a_n =0$ and $\sum_{n=0}^\infty a_n^k = \infty$. Then 
  \begin{align*}
    \sum_{m=0}^\infty \prod_{i=0}^{k-1} a_{mk+i} & = \infty.
  \end{align*}
\end{lemma}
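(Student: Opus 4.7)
The plan is to lower-bound each product $\prod_{i=0}^{k-1} a_{mk+i}$ by the $k$-th power of its smallest factor and then to show that the resulting sum of $k$-th powers of a sparse subsequence still diverges, via a sandwich comparison with $\sum_n a_n^k$.

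First I would observe that, by monotonicity, for every $i\in\{0,1,\dots,k-1\}$ one has $a_{mk+i}\geq a_{(m+1)k-1}$, so
\begin{equation*}
  \prod_{i=0}^{k-1}a_{mk+i}\ \geq\ a_{(m+1)k-1}^{\,k},
\end{equation*}
and therefore it suffices to prove that $\sum_{m=0}^\infty a_{(m+1)k-1}^k=\infty$.

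Next I would sandwich $\sum_n a_n^k$ by the $k$-th powers at the block endpoints. Partitioning $\IN$ into the blocks $\{mk,mk+1,\dots,mk+k-1\}$ and using $a_{mk+i}\leq a_{mk}$ in each block yields
\begin{equation*}
  \sum_{n=0}^\infty a_n^k\ =\ \sum_{m=0}^\infty\sum_{i=0}^{k-1}a_{mk+i}^k\ \leq\ k\sum_{m=0}^\infty a_{mk}^k,
\end{equation*}
so the divergence hypothesis $\sum_n a_n^k=\infty$ forces $\sum_{m=0}^\infty a_{mk}^k=\infty$. Finally, since $(m+1)k-1<(m+1)k$ and $(a_n)$ is non-increasing, $a_{(m+1)k-1}\geq a_{(m+1)k}$, hence
\begin{equation*}
  \sum_{m=0}^\infty a_{(m+1)k-1}^k\ \geq\ \sum_{m=0}^\infty a_{(m+1)k}^k\ =\ \Bigl(\sum_{m=0}^\infty a_{mk}^k\Bigr)-a_0^k\ =\ \infty,
\end{equation*}
which combined with the first inequality gives the lemma.

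There is no real obstacle here; the only subtlety is that the naive chain ``$\sum a_n^k=\infty\Rightarrow\sum a_{(m+1)k-1}^k=\infty$'' is not immediate from the block-partition estimate alone (that estimate goes the wrong way), which is why the small index-shift trick $a_{(m+1)k-1}\geq a_{(m+1)k}$ is needed at the end. The hypothesis $\lim_n a_n=0$ appears unnecessary for this particular lemma and is presumably included for consistency with how the lemma will be invoked later.
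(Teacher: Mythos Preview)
Your proof is correct and essentially identical to the paper's: both partition $\IN$ into blocks of length $k$, derive $\sum_{m} a_{mk}^k=\infty$ from the block upper bound $\sum_n a_n^k\leq k\sum_m a_{mk}^k$, and then lower-bound each product by the $k$-th power of a term at (or just past) the right end of its block. The only cosmetic difference is that the paper bounds $\prod_{i} a_{mk+i}\geq a_{(m+1)k}^{\,k}$ directly, whereas you first pass through $a_{(m+1)k-1}^{\,k}$ and then shift; your observation that the hypothesis $\lim_n a_n=0$ is not actually used is also correct.
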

\begin{proof}
  Let $b_{mk+i}=a_{mk}$ for $m \in \IN$, $i \in \{0, \ldots, k-1\}$ and note that 
  \begin{align*}
    \infty & =
    \sum_{n=0}^\infty a_n^k \leqslant
    \sum_{n=0}^\infty b_n^k = \sum_{m=0}^\infty k a_{mk}^k = k\sum_{m=0}^\infty a_{mk}^k,  
  \end{align*}
  so by omitting the first term of the last sum, we conclude that $\sum_{n=1}^\infty a_{nk}^k= \infty$. Therefore,
  \begin{align*}
    \sum_{m =0}^\infty \prod_{i=0}^{k-1} a_{mk+i}  & \geqslant \sum_{m =0}^\infty \prod_{i=0}^{k-1} b_{(m+1)k+i} = \sum_{m =0}^\infty \prod_{i=0}^{k-1} a_{(m+1)k} = \sum_{n =1}^\infty a_{nk}^k= \infty.
    \qedhere
  \end{align*}
\end{proof}

\begin{lemma}\label{LemR}
  Let $(a_n)_{n\in\IN}$ be a sequence of elements in the interval $[0,1]$. Suppose that $\lim_{n\to\infty} a_n =0$ and $\sum_{n=0}^\infty a_n^k = \infty$ for every $k \in \IN$. Then $(a_n)_{n\in\IN}$ may be split into infinitely many subsequences $((a_{n^{i,k}_\ell})_\ell)_{i,k}$ such that for every $i,k \in\IN$ $\sum_{\ell=0}^\infty a_{n^{i,k}_\ell}^k=\infty$.
\end{lemma}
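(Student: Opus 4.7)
Plan: I will construct the partition by a diagonal enumeration. Enumerate $\IN\times\IN$ as a sequence $(p_s)_{s\in\IN}$, writing $p_s=(i_s,k_s)$, in such a way that every pair $(i,k)\in\IN^2$ appears at infinitely many indices $s$ (for instance, pick a bijection $\IN\to\IN^3$ and project onto the first two coordinates). The subsequences will be built inductively: at each stage $s$ I append a finite batch of still-unassigned indices to the subsequence indexed by $p_s$. Maintain a pool $U_s\subseteq\IN$ of indices not yet assigned, with $U_0=\IN$.

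At stage $s$, enumerate $U_s$ in increasing order as $m_0<m_1<m_2<\cdots$ and let $t$ be the least nonnegative integer for which $\sum_{j=0}^t a_{m_j}^{k_s}\geqslant 1$. Append $m_0,\ldots,m_t$ (in this order) to the subsequence labelled by $p_s$, and set $U_{s+1}=U_s\setminus\{m_0,\ldots,m_t\}$. Since only finitely many indices have been removed by stage $s$, the set $U_s$ is cofinite in $\IN$; the hypothesis $\sum_n a_n^{k_s}=\infty$ then forces $\sum_{n\in U_s}a_n^{k_s}=\infty$, so such a finite $t$ exists and the batch is well-defined.

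Two verifications remain. First, every $n\in\IN$ is eventually assigned, because $\min U_s$ is strictly increasing in $s$ (the minimum of $U_s$ is always in the batch removed at stage $s$), so $\min U_s\to\infty$ and no fixed $n$ can remain in $U_s$ indefinitely. Second, for every $(i,k)\in\IN^2$ the subsequence labelled by $(i,k)$ satisfies $\sum_\ell a_{n^{i,k}_\ell}^k=\infty$: every stage $s$ with $p_s=(i,k)$ contributes at least $1$ to this sum by construction, and by choice of the enumeration there are infinitely many such stages.

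I do not anticipate a serious obstacle here; the only nontrivial ingredient is the observation that removing finitely many terms from a nonnegative divergent series leaves a divergent series, which is what keeps the pool rich enough for the induction to proceed indefinitely. The enumeration-with-infinite-repetition is precisely what allows one inductive construction to satisfy countably many divergence conditions simultaneously.
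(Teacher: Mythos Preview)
Your argument is correct and is essentially the paper's own proof: enumerate pairs $(i,k)$ with infinite repetition and, at each stage, peel off a finite initial block of the remaining indices whose $k_s$-th powers sum to at least $1$. Your write-up is in fact a bit more careful than the paper's, since you explicitly verify that every index is eventually assigned and that each labelled subsequence receives infinitely many batches.
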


\begin{proof}
  Fix an enumeration $(i_m,k_m)_{m\in\IN}$ of pairs $(i,k) \in \IN$ such that each pair $(i,k)$ appears infinitely many times. Then find $n_0 \in \IN$ such that $\sum_{n=0}^{n_0} a_n^{k_0} \geqslant 1$ and assign elements $a_0, \ldots, a_{n_0}$ to the sequence $(a_{n^{i_0,k_0}_\ell})_\ell$. Next find an $n_1$ such that $\sum_{n=n_0 + 1}^{n_1} a_n^{k_1} \geqslant 1$, and assign elements $a_{n_0+1}, \ldots, a_{n_1}$ to the sequence $(a_{n^{i_1,k_1}_\ell})_\ell$. Proceeding inductively, we satisfy our claim.
\end{proof}

\begin{lemma}\label{LemM}
  If for each natural number $k$ the sums
  $\sum_{n=0}^\infty p_n^k$ and $\sum_{n=0}^\infty (1- p_n)^k$ are infinite, 
  then there exists an injection $f\colon \mathfrak{X} \to \IN$ such that 
  \begin{align*}
    \forall k, m \in \IN,
    \sum_{n=0}^\infty \prod_{i=0}^{ k-1} p_{f(k, m, n, i)}\prod_{i=k}^{ 2k-1} (1-p_{f(k, m, n, i)})=\infty,
  \end{align*}
  and $\IN \setminus \rng(f)$ is infinite, where
  \begin{align*}
    \mathfrak{X} & \coloneqq \{(k,m,n,i)\in\IN^4 \mid i\leq 2k-1\}.
  \end{align*}
\end{lemma}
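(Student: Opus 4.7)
The plan is to split $\IN$ into infinite quotas $(S_{k,m})_{(k,m)\in\IN^2}$ together with an infinite leftover $R$ so that each quota inherits both divergence hypotheses, and then to build the batches of each task independently inside its own quota. The splitting is produced by a standard dovetailing: enumerate, for every $(k,m,\ell)\in\IN^3$, the sub-goals ``add fresh indices to $S_{k,m}$ whose mass in $p_n^\ell$ is at least $1$'' and ``\ldots whose mass in $(1-p_n)^\ell$ is at least $1$'', with each sub-goal appearing infinitely often; at each step, satisfy the next sub-goal using still-unused indices. Because the ambient sums $\sum_n p_n^\ell$ and $\sum_n (1-p_n)^\ell$ are infinite and only finitely many indices have been used at each stage, fresh indices providing the required mass always exist. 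Reserving one further fresh index for $R$ at each step keeps $R$ infinite, and each $S_{k,m}$ ends up satisfying $\sum_{n\in S_{k,m}} p_n^\ell = \sum_{n\in S_{k,m}} (1-p_n)^\ell = \infty$ for every $\ell$.

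Now fix a task $(k,m)$, write $S = S_{k,m}$, and decompose $S = H \sqcup L$ with $H = \{n \in S : p_n \geq 1/2\}$ and $L = S \setminus H$. If both $H$ and $L$ are infinite, partition each of them into consecutive blocks of size $k$; use the $n$-th block of $H$ for the $k$ positive factors of the $n$-th batch and the $n$-th block of $L$ for the $k$ negative factors, which defines $f$ on $\{(k,m,n,i) : i \leq 2k-1\}$. Every batch product is then at least $2^{-k}\cdot 2^{-k} = 4^{-k}$, so the sum over $n$ diverges.

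If instead $H$ is finite (the case $L$ finite being symmetric by swapping the roles of $p_n$ and $1-p_n$), then $L' \coloneqq S \setminus H$ is cofinite in $S$, has $p_n < 1/2$ throughout, and still satisfies $\sum_{n\in L'} p_n^\ell = \infty$ for every $\ell$. A further small dovetailing inside $L'$ splits it as $L' = A \sqcup B$ with both pieces infinite and $\sum_{n\in A} p_n^\ell = \infty$ for every $\ell$. Take the positive factors from $A$ and the negative factors from $B$, grouping each in consecutive blocks of size $k$. Since $1-p_n > 1/2$ on $B$, every negative-block product is at least $2^{-k}$. For the positive side, either $(p_n)_{n\in A}$ is bounded below by some $\delta > 0$ along an infinite sub-set, giving block products at least $\delta^k$, or $p_n \to 0$ along $A$; in the latter case, after reordering $A$ so that $(p_n)_{n\in A}$ is non-increasing, Lemma~\ref{LemMult} (with exponent $k$) yields directly that the sum of $k$-fold consecutive products diverges. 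Multiplying by the negative side's uniform lower bound $2^{-k}$ delivers the required divergence for the task $(k,m)$.

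The main obstacle is the asymmetric case $|H|<\infty$ (or its symmetric counterpart): the positive factors may decay to zero, so one cannot bound every factor by a constant, and Lemma~\ref{LemMult} must be invoked after a careful rearrangement. The interaction between the outer dovetailing (which distributes indices among tasks) and the inner dovetailing (which separates positives from negatives within a single task's quota) is what allows all of the countably many divergence requirements to be satisfied simultaneously while preserving injectivity of $f$ and leaving an infinite residue in $\IN\setminus\rng(f)$.
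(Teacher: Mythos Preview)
Your proof is correct. Both your argument and the paper's rely on the same two ingredients---a Lemma~\ref{LemR}-style dovetailing to distribute the divergence hypotheses, and Lemma~\ref{LemMult} for the case in which the relevant values tend to $0$---but the organization differs. The paper performs a single \emph{global} case split on the behavior of $(p_n)_n$ (is some $M_\varepsilon=\{n:\varepsilon\le p_n\le 1-\varepsilon\}$ infinite? do the $p_n$ accumulate at both $0$ and $1$? does $p_n\to 0$ or $p_n\to 1$?) and only invokes Lemma~\ref{LemR} in the third case; in the first two cases a uniform lower bound on each factor makes the construction of $f$ immediate. You instead run the dovetailing up front, handing every task $(k,m)$ its own quota $S_{k,m}$ that inherits all the divergence hypotheses, and then perform a \emph{local} case split (on whether $H=\{n\in S_{k,m}:p_n\ge 1/2\}$ and its complement are both infinite) inside each quota. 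Your route is more uniform and modular---different tasks may legitimately land in different cases---at the cost of running the splitting machinery twice (outer and inner) and of forgoing the paper's one-line treatment of the bounded-away case.
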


\begin{proof}
  Consider the following cases. 
  
  In the first case, there exists $\varepsilon>0$ such that the set $M_\varepsilon\coloneqq\{n \in \IN: \varepsilon \leqslant p_n\leqslant 1-\varepsilon\}$ is infinite. Then any injection $f \colon \mathfrak{X} \to M_\varepsilon$ with coinfinite range works. Indeed, note that for each $(k, m, n, i) \in \mathfrak{X}$ we have $p_{f(k, m, n, i)}, 1-p_{f(k, m, n, i)} \geqslant \varepsilon.$ Therefore, all terms of the considered sum are at least $\varepsilon^{2k}$, hence the sum is infinite. 

  In the second case, for every $\varepsilon>0$ the set $M_\varepsilon$ defined above is finite, but both sets $\{n \in \IN: p_n\leqslant \varepsilon\}$, $\{n \in \IN: p_n\geqslant 1-\varepsilon\}$ are infinite. Then we may fix a partition $\IN=A\cup B$ such that $(p_n)_{n\in A}$ converges to $0$ and $(p_n)_{n\in B}$ converges to $1$. For $(k,m,n,i) \in \mathfrak{X}$ put $f(k,m,n,i)\in A$ if $i \geqslant k $ and $f(k,m,n,i)\in B$ if $i \leqslant k-1$, while ensuring that $\IN \setminus \rng(f)$ is infinite. Then for all but finitely many $(k,m,n,i) \in \mathfrak{X}$ we have $p_{f(k,m,n,i)}, 1- p_{f(k,m,n,i)} > 1/2$, hence the considered sum is infinite. 
  
  In the final case, either $p_n \to 0$ or $p_n \to 1$. We will deal only with the first one, since the second one is analogous. By passing to a subsequence, we may assume that all $p_n$'s are positive. Let us use Lemma~\ref{LemR} to split $(p_n)_{n\in\IN}$ into $((p_{s^{m,k}_\ell})_\ell)_{m,k}$ such that for every $m,k \in\IN$, we have $\sum_{\ell=0}^\infty p_{s^{m,k}_\ell}^k=\infty$. Without loss of generality, we may assume that for every $m,k \in\IN$ the sequence $(p_{s^{m,k}_\ell})_\ell$ is non-increasing. Now, set $f(k,m,n,i)=s^{m+1,k}_{nk+i}$ for $i \in \{0, \ldots, k-1\}$ ($+1$ is just to leave infinitely many elements unused). Since $p_n \to 0$ we may set $f(k,m,n,i)$ for $i \in \{k, \ldots, 2k-1\}$ such that range of $f$ is co-infinite and $p_{f(k,m,n,i)}<1/2$. Then Lemma~\ref{LemMult} yields
  \begin{align*}
    \sum_{n=0}^\infty \prod_{i=0}^{ k-1} p_{f(k, m, n, i)} \prod_{i=k}^{2k-1} (1-p_{f(k, m, n, i)})
    & \geqslant
    \sum_{n=0}^\infty \prod_{i=0}^{k-1} p_{f(k, m, n, i)} \prod_{i=k}^{2k-1} \frac{1}{2}
    \\
    & = \left(\frac{1}{2}\right)^k \sum_{n=0}^\infty \prod_{i=0}^{k-1} p_{s_{nk+i}^{m+1,k}} 
    = \infty.
    \qedhere
  \end{align*}
\end{proof}

\section{Main theorem}

In this section, we formulate and prove the main theorem of this note, which fully answers Question~\ref{QM}. In fact, our theorem shows a $0/1$-law regarding the problem: either there exists a bijective assignment that generates the Random Graph with probability $1$, or for every bijective assignment the Random Graph is generated with probability $0$.
\begin{theorem}\label{mainthm}
  The following are equivalent for a sequence $(p_n)_{n \in \IN}$ of numbers from interval $[0,1]$.
  \begin{enumerate}
  \item\label{RGP1} There exists a bijective assignment $f\colon [\IN]^2\to\IN$ such that by letting
    each $\{v,w\}\in[\IN]^2$ be an edge with probability $p_{f(\{v,w\})}$, independently from other
    pairs, the resulting graph is the Random Graph with probability $1$.
  \item\label{RGPpos} Item~\eqref{RGP1} holds but the conclusion holds with positive probability
    instead of probability $1$.
  \item\label{series} For every $k\in\IN$, the sums $\sum_{n=0}^\infty p_n^k$ and
    $\sum_{n=0}^\infty (1-p_n)^k$ are infinite.
  \end{enumerate}
\end{theorem}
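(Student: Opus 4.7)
The plan is to prove \eqref{RGP1} $\Rightarrow$ \eqref{RGPpos} $\Rightarrow$ \eqref{series} $\Rightarrow$ \eqref{RGP1}, with the first implication being immediate. For \eqref{RGPpos} $\Rightarrow$ \eqref{series} I argue the contrapositive: if \eqref{series} fails, every bijective assignment yields probability $0$. By the symmetry $p_n\leftrightarrow 1-p_n$ (complementation of edges, under which the Random Graph is preserved), I may assume $\sum_n p_n^k<\infty$ for some $k\geqslant 1$. Given any bijection $f$ and any fixed $A\in[\IN]^k$, AM-GM applied to the $k$ quantities $p_{f(\{a,w\})}^k$ gives
\begin{equation*}
\sum_{w\notin A}\prod_{a\in A} p_{f(\{a,w\})}\leqslant\frac{1}{k}\sum_{a\in A}\sum_{w\notin A} p_{f(\{a,w\})}^k\leqslant\sum_{n=0}^\infty p_n^k<\infty,
\end{equation*}
using that $w\mapsto f(\{a,w\})$ is injective for each fixed $a$. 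The first Borel--Cantelli Lemma then forces, almost surely, only finitely many $w$ to be connected to every vertex in $A$; but the Random Graph has infinitely many such $w$ (a standard iteration of~\ref{star} applied to $(A\cup W,B)$ where $W$ is the hypothetical finite witness set yields a contradiction), so the resulting graph is almost surely not the Random Graph.

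The main work lies in \eqref{series} $\Rightarrow$ \eqref{RGP1}, where Lemma~\ref{LemM} does the heavy lifting. Let $f_0\colon\mathfrak{X}\to\IN$ be the injection with co-infinite range supplied by Lemma~\ref{LemM} and, for each $k\in\IN$, enumerate the ordered pairs of disjoint $k$-subsets of $\IN$ as $((A_m^k,B_m^k))_{m\in\IN}$. Partition $\IN=V_0\sqcup V_1$ into two infinite pieces; ``test vertices'' will always be drawn from $V_1$. Processing triples $(k,m,n)\in\IN^3$ in any enumeration, at stage $(k,m,n)$ choose a fresh test vertex $v_n^{k,m}\in V_1\setminus(A_m^k\cup B_m^k)$ distinct from every previously chosen test vertex, and for each $i\in\{0,\ldots,2k-1\}$ set $f(\{v_n^{k,m},u_i\})\coloneqq f_0(k,m,n,i)$, where $u_0,\ldots,u_{k-1}$ enumerate $A_m^k$ and $u_k,\ldots,u_{2k-1}$ enumerate $B_m^k$. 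Freshness keeps the labeled pairs pairwise distinct (so $f$ is well-defined), and injectivity of $f_0$ keeps the assigned indices distinct. The event $E_{k,m,n}$ that $v_n^{k,m}$ is connected to every element of $A_m^k$ and to no element of $B_m^k$ has probability equal to the $n$-th summand in the divergent series produced by Lemma~\ref{LemM}, and distinct $n$ use disjoint sets of pairs, so the family $(E_{k,m,n})_n$ is mutually independent. The second Borel--Cantelli Lemma then yields, almost surely, infinitely many $n$ with $E_{k,m,n}$ occurring, witnessing~\ref{star} for $(A_m^k,B_m^k)$. Intersecting over the countably many $(k,m)$ and padding arbitrary finite disjoint pairs up to a common size gives~\ref{star} almost surely, so the resulting graph is the Random Graph almost surely.

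To complete the construction, observe that no pair inside $V_0$ has been labeled, so the unlabeled pairs form an infinite subset of $[\IN]^2$, while the unused indices contain $\IN\setminus\rng(f_0)$, which is infinite by Lemma~\ref{LemM}; any bijection between these two infinite sets extends $f$ to the required bijection $[\IN]^2\to\IN$. The main obstacle is precisely this combinatorial bookkeeping: picking the test vertices so that all pairs $\{v_n^{k,m},u_i\}$ are pairwise distinct (so $f$ is well-defined), preserving the independence required for the second Borel--Cantelli Lemma, and leaving enough slack on both the pair side and the index side to finish with a bijective extension, all while aligning the chosen labels with the specific indexing structure supplied by Lemma~\ref{LemM}.
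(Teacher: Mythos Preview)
Your overall strategy matches the paper's: the same cycle of implications, the same use of the first Borel--Cantelli Lemma for \eqref{RGPpos}$\Rightarrow$\eqref{series} (your AM--GM bound is a minor variant of the paper's $\max$ bound), and the same reliance on Lemma~\ref{LemM} plus the second Borel--Cantelli Lemma for \eqref{series}$\Rightarrow$\eqref{RGP1}.

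There is, however, a genuine gap in your bookkeeping for \eqref{series}$\Rightarrow$\eqref{RGP1}. The claim ``freshness keeps the labeled pairs pairwise distinct'' is false: choosing $v_n^{k,m}$ to be merely a new test vertex outside $A_m^k\cup B_m^k$ does not prevent collisions with pairs labeled at earlier stages. Concretely, suppose $2,5\in V_1$. At some stage with $A=\{5\}$, $B=\{7\}$ you may pick test vertex $2$ and label $\{2,5\}$; at a later stage with $A'=\{2\}$, $B'=\{9\}$ nothing forbids picking test vertex $5$ (it lies in $V_1$, is not in $A'\cup B'$, and is not a previous test vertex), whereupon you relabel $\{5,2\}=\{2,5\}$. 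Thus $f$ need not be well-defined. The paper avoids this with a more careful inductive construction: it builds infinite sets $C_n\subseteq D_{n-1}\setminus(A_n\cup B_n)$ with $D_n\coloneqq D_{n-1}\setminus(A_n\cup B_n\cup C_n)$ still infinite, so that each $C_n$ is automatically disjoint from all earlier $A_i\cup B_i\cup C_i$; this guarantees that every pair $\{v,w\}$ meets $A_n\cup B_n$ and $C_n$ in one point each for at most one $n$. Your approach is easily repaired in the same spirit: at each stage only finitely many pairs have been labeled, so you can additionally require the fresh test vertex to avoid the finitely many $v$ for which some $\{v,u_i\}$ is already labeled. With that extra condition the rest of your argument goes through.
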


Before we prove the theorem, let us note that a standard example of a sequence $(p_n)_{n\in\IN}$
satisfying item~\eqref{series} that is not bounded away from $0$ is $p_n\coloneqq 1/(\log(n+3))$.

\begin{proof}
  We will first deal with harder implication~\eqref{series}$\implies$\eqref{RGP1}, namely, we will construct the proper assignment of probabilities provided that $\sum_{n=0}^\infty p_n^k=\sum_{n=0}^\infty (1-p_n)^k=\infty$ for every $k\in\IN$. Note that in order to check property~\ref{star} it is enough to consider sets $A, B$ of the same cardinality (by possibly taking a superset of the smaller set, disjoint from the other one). Let us then enumerate all pairs of finite disjoint subsets of $\IN$ of same size as $(A_n,B_n)_{n\in\IN}$.

  Let $f \colon \mathfrak{X} \to \IN$ be provided by Lemma~\ref{LemM}. For every $n \in \IN$, let
  $k_n\coloneqq\lvert A_n\rvert=\lvert B_n\rvert$ and define inductively in $n$ sets $C_n$ and $D_n$
  as follows: set $D_{-1}\coloneqq\IN$ and for each $n\in\IN$, let $C_n\subseteq D_{n-1}\setminus
  (A_n\cup B_n)$ be an infinite set with $D_n\coloneqq D_{n-1}\setminus(A_n\cup B_n\cup C_n)$ also
  infinite.

  Note that this definition inductively ensures that for every $\{v,w\}\in[\IN]^2$, there exists at most one $n\in\IN$ such that $\{v,w\}$ intersects both $A_n\cup B_n$ and $C_n$ in exactly one point each. This means that we can proceed inductively to determine the probabilities of existence of edges between $A_n\cup B_n$ and $C_n$ in the $n$'th step of our induction without running the risk of defining $p_{\{v,w\}}$ more than once.

  When handling $(A_n\cup B_n,C_n)$ in the $n$'th step of induction, we will use the first unused sequence given by $f$ that is suitable for $k_n=\lvert A_n\rvert=\lvert B_n\rvert$. Formally, we set $\ell_n\coloneqq\lvert\{i<n: k_i=k_n\}\rvert$ and use the probabilities $p_{f(k_n, \ell_n, \place, \place)}$. More precisely, let us enumerate $A_n = \{a_0<a_1 < \cdots < a_{k-1}\}$, $B_n = \{b_0<b_1 < \cdots < b_{k-1}\}$, $C_n = \{c_0<c_1 < \cdots \}$, and set the probability of existence of the edge between $a_i$ and $c_j$ as $p_{f(k_n, \ell_n,j,i)}$, while the probability of existence of the edge between $b_i$ and $c_j$ we set as $p_{f(k_n, \ell_n,j,i+k_n)}$. Finally, as Lemma~\ref{LemM} leaves us with infinitely many unused probabilities, we assign them to the missing edges. Note that there are infinitely many such edges, as we have e.g.\ left all edges between elements of $C_0 \setminus (A_1\cup B_1)$ and $C_1$ free. 

  Let us check that the given construction produces the Random Graph with probability $1$. Indeed, let us fix finite disjoint sets with the same cardinality $A, B \subseteq \IN$. We have to check that with probability $1$ there is a vertex $v$ connected to all elements of $A$ and to no element of $B$; let us call this property ``being well-connected to $(A,B)$''. Let $n \in \IN$ be such that $(A, B)=(A_n,B_n)$ and note that for a fixed element $c_j \in C_n$, the probability that $c_j$ is well-connected to $(A,B)$ is exactly 
  \begin{align*}
    \prod_{i=0}^{k-1} p_{f(k_n, \ell_n, j, i)} \cdot \prod_{i=k}^{2k-1} \left( 1- p_{f(k_n, \ell_n, j, i)}\right),
  \end{align*}
  so, by Lemma~\ref{LemM}, the sum of those probabilities over all $c_j$'s is infinite. Note that if $j \neq j'$, then well-connectedness of $c_j$ and $c_{j'}$ to $(A,B)$ are clearly independent. Therefore, by the Second Borel--Cantelli Lemma, with probability $1$ there exists infinitely many $c_j$'s that are well-connected to $(A,B)$. Since there are countably many pairs $(A,B)$ we conclude that with probability $1$ property~\ref{star} is satisfied.

  \medskip

  The implication~\eqref{RGP1}$\implies$\eqref{RGPpos} is obvious, so it remains to prove the implication~\eqref{RGPpos}$\implies$\eqref{series}, which we prove by its contra-positive: we will show that if there exists $k\in\IN$ such that either $\sum_{n=0}^\infty p_n^k$ or $\sum_{n=0}^\infty (1-p_n)^k$ is finite, then with probability $1$, the resulting graph is \emph{not} the Random Graph.

  We prove only the case when $\sum_{n=0}^\infty p_n^k$ is finite as the other case is analogous. Let $A$ be any set of cardinality $k$, enumerate its elements as $a_0,\ldots,a_{k-1}$ and the elements of $\IN\setminus A$ as $v_0,v_1,\ldots$. For each $m\in\IN$ and each $i\leq k-1$, let $p_{n_{m,i}}\in[0,1]$ be the probability value assigned to $\{a_i,v_m\}$. Note that
  \begin{align*}
    \sum_{m=0}^\infty \prod_{i=0}^{k-1} p_{n_{m,i}}
    & \leq
    \sum_{m=0}^\infty \max_{i\leq k-1} p_{n_{m,i}}^k
    \leq
    \sum_{m=0}^\infty \sum_{i=0}^{k-1} p_{n_{m,i}}^k
    \leq
    \sum_{n=0}^\infty p_n^k
    <
    \infty,
  \end{align*}
  so by the (first) Borel--Cantelli Lemma, it follows that with probability $1$, there are only finitely many $v_j$ that are adjacent to all vertices of $A$. Therefore, by adding to the set $A$ those finitely many vertices, we see that~\ref{star} does not hold.
\end{proof}

Note that the result of this article easily extends to the Random $t$-Hypergraph. Namely, for $t\geq
2$, we say that a $t$-hypergraph $(V,E)$ has the property~\ref{tstar} if
\begin{equation}\label{tstar}
  \begin{aligned}
    & \mbox{For all finite disjoint $A, B\subseteq [V]^{t-1}$ there is a vertex $v\in V$ such that}\\
    & \mbox{$a\cup\{v\}\in E$ for every $a\in A$ and $b\cup\{v\}\notin E$ for every $b\in B$.}
  \end{aligned}
  \tag{$\star_t$}
\end{equation}
Again a simple back-and-forth argument shows that there is a unique (up to isomorphism) countable
$t$-hypergraph with property~\ref{tstar}, which we call the \emph{Random $t$-Hypergraph} and a
simple way of generating the Random $t$-Hypergraph with probability $1$ is to declare each $t$-set
to be an edge with probability $1/2$, independently from other $t$-sets. Finally, the following result analogous
to Theorem~\ref{mainthm} holds for the Random $t$-Hypergraph with an analogous proof:
\begin{theorem}
  The following are equivalent for $t\geq 2$ and a sequence $(p_n)_{n \in \IN}$ of numbers from
  interval $[0,1]$.
  \begin{enumerate}
  \item\label{RHP1} There exists a bijective assignment $f\colon [\IN]^t\to\IN$ such that by letting
    each $e\in[\IN]^t$ be an edge with probability $p_{f(e)}$, independently from other $t$-sets,
    the resulting $t$-hypergraph is the Random $t$-Hypergraph with probability $1$.
  \item Item~\eqref{RHP1} holds but the conclusion holds with positive probability instead of
    probability $1$.
  \item For every $k\in\IN$, the sums $\sum_{n=0}^\infty p_n^k$ and
    $\sum_{n=0}^\infty (1-p_n)^k$ are infinite.
  \end{enumerate}
\end{theorem}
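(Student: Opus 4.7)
The plan is to transcribe the proof of Theorem~\ref{mainthm} almost verbatim, adjusting the bookkeeping to account for $t$-edges rather than pairs. The implication \eqref{RHP1}$\implies$(2) is immediate as before.

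For (3)$\implies$\eqref{RHP1}, I would enumerate all pairs $(A_n,B_n)$ of finite disjoint subsets of $[\IN]^{t-1}$ with $\lvert A_n\rvert=\lvert B_n\rvert=:k_n$, build vertex sets $C_n\subseteq\IN$ inductively, apply the injection $f$ from Lemma~\ref{LemM} to assign probabilities, and finish with the second Borel--Cantelli lemma exactly as in the main theorem. The one structural change is that the set to exclude when choosing $C_n$ is not $A_n\cup B_n$ itself (which now consists of $(t-1)$-sets) but the vertex union $\bigcup(A_n\cup B_n)\subseteq\IN$; concretely, set $D_{-1}\coloneqq\IN$, take an infinite $C_n\subseteq D_{n-1}\setminus\bigcup(A_n\cup B_n)$, and put $D_n\coloneqq D_{n-1}\setminus(\bigcup(A_n\cup B_n)\cup C_n)$, which stays feasible since each $A_n\cup B_n$ is finite with $(t-1)$-set elements. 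For each $c_j\in C_n=\{c_0<c_1<\cdots\}$ and each index $i$, assign to the $t$-edge $a_i\cup\{c_j\}$ (resp.\ $b_i\cup\{c_j\}$) the probability $p_{f(k_n,\ell_n,j,i)}$ (resp.\ $p_{f(k_n,\ell_n,j,i+k_n)}$), with $\ell_n\coloneqq\lvert\{i<n:k_i=k_n\}\rvert$. The probability that $c_j$ is well-connected to $(A_n,B_n)$ in the sense of~\ref{tstar} is then exactly the quantity summed in Lemma~\ref{LemM}; since these events are independent across $j$, the second Borel--Cantelli lemma yields~\ref{tstar} almost surely.

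The step I expect to require the most care is the no-double-assignment check. Suppose $a\cup\{c\}=a'\cup\{c'\}$ with $a\in A_n\cup B_n$, $c\in C_n$, $a'\in A_m\cup B_m$, $c'\in C_m$. If $c=c'$, then $a=a'$, and pairwise disjointness of the $C_n$'s (from $C_m\subseteq D_{m-1}$ for $m>n$) forces $n=m$. If $c\neq c'$, then $c\in a'\subseteq\bigcup(A_m\cup B_m)$; for $m\leq n$ this contradicts $C_n\subseteq D_m$, and for $m>n$ the symmetric argument via $c'\in a\subseteq\bigcup(A_n\cup B_n)$ applies. The enlarged exclusion $\bigcup(A_n\cup B_n)$ in the induction is precisely what makes this argument work.

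For (2)$\implies$(3) I argue by contrapositive, treating only $\sum_n p_n^k<\infty$ (the other case is analogous). Fix $A\in[[\IN]^{t-1}]^k$ and put $V'\coloneqq\IN\setminus\bigcup A$. For $v\in V'$, the probability that $a\cup\{v\}$ is an edge for every $a\in A$ equals $\prod_{a\in A}p_{f(a\cup\{v\})}$. Using $\prod_{a\in A}x_a\leq\max_a x_a^k\leq\sum_a x_a^k$ for $x_a\in[0,1]$, and noting that each $t$-edge is counted at most $k$ times across pairs $(a,v)$ with $a\cup\{v\}$ fixed, the sum over $v\in V'$ is at most $k\sum_n p_n^k<\infty$. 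The first Borel--Cantelli lemma leaves almost surely only finitely many $v_1,\ldots,v_s$ extending $A$ to edges; choosing distinct $(t-1)$-sets $b_i\ni v_i$ and setting $B\coloneqq\{b_1,\ldots,b_s\}$, no vertex can witness~\ref{tstar} for $(A,B)$, since extending $A$ forces $v\in\{v_1,\ldots,v_s\}$ while the requirement $v\notin b_i$ forces $v\neq v_i$.
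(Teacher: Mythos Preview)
Your adaptation of the forward direction \eqref{series}$\implies$\eqref{RHP1} is correct, and in particular the no-double-assignment argument with the enlarged exclusion set $\bigcup(A_n\cup B_n)$ is exactly the right modification.

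There is a genuine gap in your contrapositive for (2)$\implies$(3). You write that ``the requirement $v\notin b_i$ forces $v\neq v_i$'', but~\ref{tstar} imposes no such requirement: the condition on the $B$-side is only $b_i\cup\{v\}\notin E$. When $v=v_i\in b_i$, the set $b_i\cup\{v_i\}$ has cardinality $t-1$, hence automatically lies outside $E\subseteq[\IN]^t$, so this condition is vacuously satisfied. Thus your $B$ does nothing to exclude $v_i$ as a witness, and the pair $(A,B)$ may well admit one.

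The fix is to enlarge $A$ rather than $B$, mirroring exactly what the paper does in the graph case (``adding to the set $A$ those finitely many vertices''). Choose $(t-1)$-sets $a'_1,\ldots,a'_s$ with $v_i\in a'_i$, and set $A'\coloneqq A\cup\{a'_1,\ldots,a'_s\}$. Any witness $v$ for $(A',\varnothing)$ is in particular a witness for $A$, so $v\in\{v_1,\ldots,v_s\}$ (your Borel--Cantelli step, together with the observation that any witness for $A$ must avoid $\bigcup A$). But $a'_j\cup\{v\}\in E$ forces $\lvert a'_j\cup\{v\}\rvert=t$, hence $v\notin a'_j$, hence $v\neq v_j$; running over all $j$ gives a contradiction. (Incidentally, your counting bound is slightly loose: since $v$ is the unique element of $a\cup\{v\}$ outside $\bigcup A$, the map $(a,v)\mapsto a\cup\{v\}$ is injective on $A\times V'$, so the factor $k$ is unnecessary---but this does not affect correctness.)
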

\begin{remark}
Notice an unexpected resemblence of our Theorem with \cite[Theorem 1.3]{Bart}, where Bartoszy\'{n}ski tries to characterize for which measures $\mu$ on $2^\IN$ all filters on $\IN$ are $\mu$-measureable. 
\end{remark}

\subsection*{Acknowledgement}
The authors are gratefull to S\l awomir Solecki for inspiring discussions.

\bibliographystyle{amsplain}

\begin{thebibliography}{10}
\bibitem{Ack} Ackermann, W. (1937), "Die Widerspruchsfreiheit der allgemeinen Mengenlehre", \emph{Mathematische Annalen}, 114 (1): 305--315

\bibitem{Bart}  T. Bartoszy\'{n}ski (1991/92), "On the structure of measureable filters on a countable set" \emph{Real Anal. Exchange}, 17 (2), 681 -- 701, 

\bibitem{Ca} Cameron P. J.(2013). "The Random Graph",
  \emph{The Methematics of Paul Erd\H os II}.
  Nowy Jork: Springer. ISBN 978-1-4614-7254-4

\bibitem{CDM} Chatterjee S., Diaconis P., Miclo L. (2022). "A random walk on the {R}ado graph",
  \emph{Toeplitz operators and random matrices---in memory of {H}arold {W}idom}.
  Cham: Birkh\"{a}user/Springer

\bibitem{DEKK} Darji U. B., Elekes M., Kalina K., Kiss V., Vidnyánszky Z., (2022). "The structure of random automorphisms of the random graph",
  \emph{Annals of Pure and Applied Logic 173, \textbf{9}}.

\bibitem{DM} Diaconis P., Malliaris M. (2021). "Complexity and randomness in the {H}eisenberg groups (and beyond)", \emph{New Zealand Journal of Mathematics 52}.

\bibitem{ER} Erd\H{o}s, P.; R\'{e}nyi, A. (1963), "Asymmetric graphs", \emph{Acta Math. Acad. Scien. Hung.}, \textbf{14} (3-4): 295--315

\bibitem{GT} Guzm\'{a}n O., Todorcevic S. (2023). "Forcing with copies of the {R}ado and {H}enson graphs",
  \emph{Annals of Pure and Applied Logic 174, nr 8}.

\bibitem{Rad} Rado, Richard (1964), "Universal graphs and universal functions", \emph{Acta Arith.}, \textbf{9} (4): 331--340, 
\end{thebibliography}

\end{document}